\theoremstyle{plain}
\newtheorem{theorem*}{Theorem}
\newtheorem{theorem}{Theorem}
\newtheorem{prp}[theorem]{Proposition}
\newtheorem{corollary}[theorem]{Corollary}
\newtheorem{claim}[theorem]{Claim}
\newtheorem{subclaim}[theorem]{Subclaim}
\numberwithin{equation}{section}
\theoremstyle{definition}
\newtheorem{definition}[theorem]{Definition}
\theoremstyle{remark}
\DeclareMathOperator{\CH}{\mathsf{CH}}
\tikzset{
 symbol/.style={
 draw=none,
 every to/.append style={
 edge node={node [sloped, allow upside down, auto=false]{$#1$}}}
 }
}
\numberwithin{theorem}{section}
\newcommand{\bbN}{{\mathbb N}}
\newcounter{my_enumerate_counter}
\newcommand{\pushcounter}{\setcounter{my_enumerate_counter}{\value{enumi}}}
\newcommand{\popcounter}{\setcounter{enumi}{\value{my_enumerate_counter}}}
\newcommand{\norm}[1]{\left\lVert #1 \right\rVert}
\DeclareMathOperator{\OCA}{{\mathsf {OCA}}}
\DeclareMathOperator{\WEP}{{\mathsf {wEP}}}
\newcommand{\supp}{\mathrm{supp}}
\DeclareMathOperator{\MA}{{\mathsf {MA}_{\aleph_1}}}
\DeclareMathOperator{\ZFC}{{\mathsf {ZFC}}}
\newcommand{\cstar}{$\mathrm{C^*}$}
\title{The weak Extension Principle}
\author{Alessandro Vignati}
\address[AV]{
 Institut de Math\'ematiques de Jussieu - Paris Rive Gauche (IMJ-PRG)\\
 Universit\'e Paris Cit\'e\\ Institut Universitaire de France\\
 B\^atiment Sophie Germain\\
 8 Place Aur\'elie Nemours \\ 75013 Paris, France}
\email{vignati@imj-prg.fr}
\urladdr{http://www.automorph.net/avignati}
\author{Deniz Yilmaz}
\address[DY]{
Institut de Recherche en Informatique Fondamentale (IRIF)\\
 Universit\'e Paris Cit\'e\\
 B\^atiment Sophie Germain\\
 8 Place Aur\'elie Nemours \\ 75013 Paris, France}
\email{deniz.yilmaz@irif.fr}
\urladdr{https://denizyilmaz.fr}
\date{\today}
\begin{document}
\begin{abstract}
    We prove a rigidity result for maps between \v{C}ech--Stone remainders under fairly mild forcing axioms.
\end{abstract}
\maketitle
\section{Introduction}
This paper focuses on \v{C}ech--Stone remainders of locally compact topological spaces, spaces of the form $X^*=\beta X\setminus X$ where $\beta X$ is the \v{C}ech--Stone compactification of $X$, and maps between them.

A natural way to construct a continuous map between \v{C}ech--Stone remainders is to consider the restriction  of the compactification of a continuous map between the underlying spaces (perhaps forgetting about compact sets). To be precise, if $X$ and $Y$ are locally compact noncompact spaces, and $V_X\subseteq X$ is an open set with compact closure\footnote{The need of $V_X$ is justified, for example, by the fact that the spaces $\mathbb R^*$ and $((-\infty,0]\cup [1,\infty))^*$ are homeomorphic, but no continuous function $\mathbb R\to (-\infty,0]\cup[1,\infty)$ can induce a homeomorphism between them.}, a proper continuous map $X\setminus V_X\to Y$ extends to a map $\beta (X\setminus V_X)\to\beta Y$ whose restriction to $X^*$ gives a continuous map $X^*\to Y^*$. Does this construction recover all continuous maps between \v{C}ech--Stone remainders, at least without appealing to unnatural back-and-forth constructions? This question was first analyzed for the simplest space among all, $\mathbb N$ with the discrete topology, and it was given a negative answer: in \cite{dow2014non}, Dow constructed (in $\ZFC$) a nontrivial copy of $\bbN^*$ inside $\bbN^*$ which is nowhere dense, and therefore cannot come from a map $\bbN\to\bbN$ as above (further considerations on nontrivial copies of $\bbN^*$ were recently made in \cite{dow2024autohomeomorphismspreimagesmathbbn} and \cite{dow2024nontrivialcopiesn}). Conjecturally, this is all there is in $\ZFC$, and under some reasonably weak forcing axioms, all maps between \v{C}ech--Stone remainders arise as a blend of two maps as above. This intuition was formalised by Farah, who in \cite[\S4]{Fa:AQ} introduced the \emph{weak Extension Principle}. This principle was stated originally for pairs of zero-dimensional spaces; the following adapts it to the class of locally compact noncompact second countable spaces:
\begin{definition}
Let $X$ and $Y$ be locally compact noncompact second countable topological spaces. We say that $X$ and $Y$ satisfy the weak Extension Principle, and write $\WEP(X,Y)$, if the following happens for all pairs of naturals $d ,\ell \geq 1$:

For every continuous map $F\colon (X^*)^d\to (Y^*)^\ell$ there exists a partition into clopen sets $(X^*)^d =U_0\cup U_1$, an open set with compact closure $V_X\subseteq X$ such that $F[U_0]$ is nowhere dense in $(Y^*)^\ell$, and a continuous function $G\colon (\beta(X\setminus V_X))^d\to(\beta Y)^\ell$ which restricts to $F$ on $U_1$.

By $\WEP$ we denote the statement ``$\WEP(X,Y)$ holds whenever $X$ and $Y$ are locally compact, noncompact second countable spaces".
\end{definition}

The weak Extension Principle cannot be a consequence of $\ZFC$, as witnessed by the existence of nontrivial autohomeomorphisms of $\mathbb N^*$ under the Continuum Hypothesis $\CH$ (\cite{Ru}). More in depth, by a classical result of Parovi\v{c}enko (\cite{Pa:Universal}, $\CH$ implies that if $X$ is a zero-dimensional locally compact noncompact second countable space, then $X^*$ is homeomorphic to $\bbN^*$. All of these nontrivial homeomorphisms witness the failure of $\WEP$. Further, again under $\CH$, all \v{C}ech--Stone remainders (of second countable spaces) are continuous images of $\bbN^*$, and all connected ones are continuous images of $[0,1)^*$. A simple counting argument 
shows most of these maps cannot arise from continuous maps from $\bbN$ (or $[0,1)$) to the locally compact space of interest. In general, $\CH$ and back-and-forth constructions make essentially impossible to classify maps between \v{C}ech--Stone remainders.

Is it then possible that $\WEP$ is consistent? Instances of $\WEP$ were analysed first for $X=Y=\bbN$, and more generally to study maps between finite powers of $\mathbb N^*$ in different models of set theory (see \cite{Just:Omega^n, Fa:Powers, Just:WAT,vD:Prime,DoHa:Images} and \cite{vanMill.Three}, or \cite{Fa:Fourth} for more related questions). Farah, while formally introducing the $\WEP$, showed under some fairly mild forcing axioms (which are, Todorcevi\`c's $\OCA$ and Martin's Axioms $\MA$), $\WEP(X,Y)$ holds for pairs of countable locally compact noncompact spaces. Further results were obtained in \cite{FaMcK:Homeomorphisms} and \cite{vignati2018rigidity}, focusing on just homeomorphisms instead of general continuous maps. For a lengthy and detailed treatment on how forcing axioms impact the structure of homomorphisms between quotient structures, see \cite{farah2022corona}.

The main result of this article is that the weak Extension Principle holds unconditionally on the spaces of interest, again under the same forcing axioms assumptions.

\begin{theorem}\label{thm:main}
Assume $\OCA$ and $\MA$. Then $\WEP$ holds.
\end{theorem}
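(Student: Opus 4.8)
The plan is to dualize the problem into a statement about maps between reduced products of sequences of compact metric spaces, and then to run the $\OCA+\MA$ rigidity machinery on it. First I would fix $X$, $Y$ locally compact noncompact second countable and $d,\ell\geq 1$, and exhaust $X$ by an increasing sequence of compact sets $X_0\subseteq X_1\subseteq\cdots$ with $X_n\subseteq\mathrm{int}(X_{n+1})$ and $X=\bigcup_n X_n$. Setting $\cM_n=\overline{X_{n+1}\setminus X_n}$, the remainder $X^*$ is recovered from the sequence $(\cM_n)_n$, and $(X^*)^d$ is recovered as a quotient $\prod_n \cM_n^d/\cI$ for an appropriate product ideal; the covering invariants $\ttcov$ and the ideal $\cJprod$ introduced above are precisely what encode the $d$-fold power faithfully and uniformly in $d$. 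By Gelfand duality (in the connected case) or Stone duality (in the zero-dimensional case), the continuous $F\colon (X^*)^d\to (Y^*)^\ell$ transposes to a homomorphism $\Lambda\colon\prNJ\to\prMI$ of the corresponding reduced-product structures.

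The heart of the argument is a lifting theorem: under $\OCA$, with $\MA$ used for amalgamation, $\Lambda$ admits a topologically trivial lift. Concretely, I would produce a sequence of continuous maps between the finite-stage pieces of $Y$ and of $X$ inducing $\Lambda$ modulo the ideal on a set of coordinates that is large, i.e.\ co-small with respect to $\cI$. The $\OCA$ step is the familiar dichotomy, applied to an open coloring that separates pairs of inputs on which a candidate lift behaves incoherently: the uncountable $0$-homogeneous alternative would yield an uncountable family of pairwise incompatible local behaviours, which $\MA$ rules out by a ccc argument, while the remaining $\sigma$-$1$-homogeneous alternative delivers a lift that is continuous, hence genuinely trivial, on a large set. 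Using $\MA$ to stitch these local lifts together coherently produces a single continuous $G\colon(\beta(X\setminus V_X))^d\to(\beta Y)^\ell$, where the open set $V_X$ with compact closure absorbs the finitely many initial stages on which the lift is undefined or misbehaves.

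Finally I would read off the clopen partition $(X^*)^d=U_0\cup U_1$. On $U_1$, the clopen set where the lift is genuinely coordinatewise, i.e.\ where it arises from honest maps between the underlying pieces of $X\setminus V_X$ and $Y$, the map $G$ restricts to $F$ by construction. On the complementary clopen set $U_0$ the dichotomy leaves only degenerate behaviour available: $F\rs U_0$ must factor through a nowhere dense subset of $(Y^*)^\ell$, so $F[U_0]$ is nowhere dense, as required.

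The main obstacle I anticipate is the treatment of the powers $d,\ell>1$. Representing $(X^*)^d$ as a reduced product to which the lifting machinery applies, and controlling the interaction between the $d$ coordinates so that the $\OCA$ coloring and the $\MA$ amalgamation go through uniformly, is the delicate point; this is presumably what the product ideal $\cJprod$ and the covering numbers $\tkcov$ and $\ttcov$ are engineered to handle. A secondary difficulty is that the connected case, such as $X=[0,1)$, forces the full $C^*$-algebraic duality rather than the cleaner Boolean picture available when $X$ is zero-dimensional, so the lifting theorem must be run at the level of reduced products of arbitrary compact metric spaces rather than Boolean algebras.
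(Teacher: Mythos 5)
There is a genuine gap, and it sits exactly where you flag your ``main obstacle'': the powers $d,\ell>1$. Your plan is to encode $(X^*)^d$ as a reduced product $\prod_n\cM_n^d/\cI$ and run the lifting machinery on it uniformly in $d$, invoking ``the covering invariants $\ttcov$ and the ideal $\cJprod$ introduced above'' --- but no such objects are introduced anywhere in this paper (they are unused macro names in the preamble), and the encoding itself does not exist: $(X^*)^d$ is neither the remainder of $X^d$ nor a corona of $\prod_n\cM_n^d$, so there is no reduced-product presentation to which a coordinatewise lifting theorem could apply. The paper's route around this is entirely different and is the step you are missing: Farah's dimension theorem (Theorem~\ref{theorem:products}) says that any continuous $K^d\to Y^*$ from a power of a compact space is covered by finitely many clopen rectangles on each of which it depends on a single coordinate. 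Combined with postcomposition by the $\ell$ projections, this reduces the whole problem to $d=\ell=1$ (Proposition~\ref{prop:dimension}) before any lifting machinery is touched. Without that reduction, your OCA coloring has nothing well-defined to act on in the higher-dimensional case.

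For $d=\ell=1$ your outline is closer in spirit to the paper, but it still elides the actual content. The OCA$+$MA lifting theorem is not reproved here; it is imported from \cite{vignati2018rigidity} as Proposition~\ref{prop:oldrigidity}, and what it gives is weaker than what you assume: liftings only on two overlapping blocked subalgebras $\prod_nC_0(U_n^e)$ and $\prod_nC_0(U_n^o)$, agreeing with $\Phi$ only on restrictions $f_S$ for $S$ in a nonmeager ideal $\mathcal I_f$ depending on $f$. The work of the paper is (i) gluing the dual maps $\gamma_n^i$ into a single proper continuous $\Gamma$ on $V_1=\bigcup_n V_n^e\cup V_n^o$ and checking $V_1$ is clopen modulo a compact set, (ii) upgrading ``agreement on a nonmeager ideal of coordinates'' to a genuine lift of $\Phi_1$, which uses the fact that the only Borel nonmeager ideal containing $\Fin$ is $\cP(\bbN)$, and (iii) proving $F[U_0]$ is nowhere dense by an explicit test-function argument (choosing $h=\sum h_n$ supported near a putative open subset of $F[U_0]$ and deriving $\chi_{U_0}\pi_X(\Psi(h_T))=0$ against $\norm{\Phi(h_T)\chi_{U_0}}=1$). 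Your proposal asserts the conclusions of (ii) and (iii) --- ``$G$ restricts to $F$ by construction'' and ``the dichotomy leaves only degenerate behaviour'' --- without an argument, and these are precisely Claims~\ref{claim1} and~\ref{claim2}, the nontrivial part of the proof.
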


As a consequence, we show that the thesis of \cite[Theorem 5.1]{Fa:Dimension} holds (under $\OCA$ and $\MA$) for all pairs of locally compact noncompact second countable spaces: if $\kappa$ and $\lambda$ are cardinals with $\kappa<\lambda$ then there is no continuous surjection $(X^*)^\kappa\to(Y^*)^\lambda$. This result does not hold under $\CH$, as under this assumption $\bbN^*$ continuously surjects onto every compact topological space of weight $\leq\mathfrak c$, and specifically onto every finite power of every \v{C}ech--Stone remainder of a locally compact second countable topological space, while $[0,1)^*$ continuously surjects onto every finite power of every such connected \v{C}ech--Stone remainder.

The proof of our main result proceeds as follows: first, we use a result of Farah (\cite{Fa:Dimension}) on maps between powers of \v{C}ech--Stone remainders (more generally, $\beta\bbN$-spaces) to simplify the situation and show that it is enough to show $\WEP$ for maps where $d=\ell=1$ (Proposition~\ref{prop:dimension}). Secondly, we rely on Gel'fand's duality, and study $^*$-homomorphisms between coronas of abelian \cstar-algebras. We build on a strong lifting theorem proved in \cite{vignati2018rigidity} (see Proposition~\ref{prop:oldrigidity}) which gives that, under forcing axioms, $^*$-homomorphisms between abelian corona \cstar-algebras have well-behaved liftings on certain blocked subalgebras. The main content of our proof is to glue and analyse such liftings. This approach is fairly different from the one of \cite[\S4]{Fa:AQ}, where Boolean algebras of clopen sets were used. Once the dimension increases, Stone duality is replaced by Gel'fand's one, as clopen sets do not recover completely the topology in this case.

For all background material on \cstar-algebras theory we refer the reader to \cite{Black:Operator}, or to \cite{Fa:STCstar} for the connections between \cstar-algebras and set theory. For notions in topology, we refer to the standard book of Munkres (\cite{Munkres.Topology}) and to \cite{GillManJerison.Rings} more specifically for \v{C}ech--Stone compactifications and remainders.

\subsection*{Acknowledgements}
The authors would like to thank I. Farah for useful conversations, and the anonymous referee for helpful comments.
\subsection*{Funding}
This research originated from DY's Master thesis, written under the guidance of AV at Universit\'e Paris Cit\'e. In this period, DY was supported by an INSMI (Institut National des Sciences Mathématiques et de leurs Interactions) grant for master students.

\section{Proof of the main result}

This main section is fully dedicated to the proof of  Theorem \ref{thm:main}. As mentioned, our first step is to allow us to only consider maps between \v{C}ech--Stone remainders. This is due to a powerful result of Farah. Some context: If $d\geq 1$, a function between topological spaces $F \colon X^d \to Y$ is said to depend on at most one coordinate on $U \subseteq X^d$ if there exists $j \leq d$ and $H \colon X \to Y$ such that $F \restriction U = H \restriction \pi_j (U)$, $\pi_j\colon X^d\to X$ being the projection on the $j$-th coordinate.  A rectangle in $X^d$ is a set of the form $\prod_{i\leq d}A_i$, where $A_i\subseteq X$. The following is a reformulation of \cite[Theorem 3]{Fa:Dimension}\footnote{Theorem 3 in \cite{Fa:Dimension} works even if $d$ is an infinite cardinal. Further, the codomain space $Y$ needs to be a $\beta\bbN$-space, and not necessarily a remainder space.}.
\begin{theorem}\label{theorem:products}
Let $K$ be a compact space, $Y$ be locally compact, noncompact and second countable, $d\geq 1$, and let $F\colon K^d\to Y^*$ be a continuous map. Then $K^d$ can be covered by finitely many clopen rectangles on each of which $F$ depends on at most one coordinate.
\end{theorem}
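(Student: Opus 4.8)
The plan is to reduce to the case of two factors and then to isolate the rigidity of the remainder $Y^*$.

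I would first record the elementary fact that every clopen subset $C$ of a product $K_1\times K_2$ of compacta is a \emph{finite union of clopen rectangles}: for fixed $x$ the slice $C_x=\{y:(x,y)\in C\}$ is clopen in $K_2$, and two applications of the tube lemma (to $\{x\}\times C_x\subseteq C$ and to $\{x\}\times(K_2\setminus C_x)\subseteq (K_1\times K_2)\setminus C$) show that $x\mapsto C_x$ is locally constant; by compactness of $K_1$ it takes finitely many clopen values $b_1,\dots,b_m$, whence $C=\bigcup_{i\le m}a_i\times b_i$ with $a_i=\{x:C_x=b_i\}$ clopen. Granting this, the general statement follows by induction on $d$ from the case $d=2$. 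Indeed, applying the two-factor case to $F\colon K\times K^{d-1}\to Y^*$ covers the domain by clopen rectangles $A\times B$, with $B\subseteq K^{d-1}$ clopen, on each of which $F$ factors through one of the two projections; a piece factoring through the first projection already exhibits dependence on coordinate $1$, while on a piece factoring through the projection to $K^{d-1}$ one freezes the first coordinate (choosing $x_1^0\in A$ and setting $\tilde G(x')=F(x_1^0,x')$) to obtain a map $K^{d-1}\to Y^*$, applies the inductive hypothesis, and re-decomposes the resulting clopen subsets of $K^{d-1}$ into clopen rectangles via the fact just proved. The finitely many resulting rectangles cover $K^d$.

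So the whole content is the two-factor case: for continuous $F\colon K_1\times K_2\to Y^*$ one must cover $K_1\times K_2$ by finitely many clopen rectangles on each of which $F$ factors through a single projection. Here I would bring in the defining feature of a \v{C}ech--Stone remainder of a locally compact, noncompact, second countable (hence $\sigma$-compact metrizable) space: $Y^*$ is an F-space with no nontrivial convergent sequences. Writing $Y=\bigcup_n L_n$ as an increasing exhaustion by compact ``annuli'', the algebra $C(Y^*)=C_b(Y)/C_0(Y)$ is an asymptotic product of the \emph{metrizable} pieces $C(L_n)$, so a continuous map into $Y^*$ is encoded by a coherent sequence of maps into the metrizable compacta $L_n$. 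The strategy is then to argue by contradiction: if no clopen-rectangle cover works, there is a point at which $F$ genuinely depends on both coordinates — the two dependences cannot be separated by any clopen rectangle — and I would convert this into a grid of points on which $F$, read through the annuli $L_n$, realizes a nontrivial convergent sequence in $Y^*$, contradicting the absence of such sequences.

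The hard part will be exactly this last conversion, because $K_1$ and $K_2$ are arbitrary compacta, not metrizable: there need be no countable neighborhood bases at the relevant points of the domain, so the witnessing convergent sequence cannot be extracted naively. I would handle this by reflecting the configuration into a countable elementary submodel — a standard device in this area, compare the rigidity arguments of \cite{vignati2018rigidity} — which replaces the ambient data by a separable, sequential approximation in which the grid and its limit genuinely live, while the metrizable annuli $L_n$ supply the target side of the sequence. A separate, easier point is the degenerate regime where a factor is connected and clopen rectangles are scarce: there one argues directly that $F$, whose image is a continuous image of a connected compactum inside an F-space with no nontrivial convergent sequences, is so constrained that it must factor through a single projection on the whole factor. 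Assembling the resulting clopen rectangles and invoking compactness of $K_1\times K_2$ then yields the required finite cover, completing the two-factor case and hence the theorem.
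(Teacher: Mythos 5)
The paper does not prove this statement: it is quoted as a reformulation of Farah's Theorem~3 in \cite{Fa:Dimension} (see the footnote there), so there is no internal proof to compare yours against; your attempt has to stand on its own, and it has a genuine gap. The preliminary reductions are fine: the tube-lemma argument that a clopen subset of a product of two compacta is a finite union of clopen rectangles is correct, and the induction reducing $d$ factors to two (extending the frozen map from the clopen set $B\subseteq K^{d-1}$ to all of $K^{d-1}$ and re-decomposing) goes through. But you correctly identify the two-factor case as ``the whole content'' and then do not prove it. The negation of the conclusion gives, by compactness, a point of $K_1\times K_2$ no clopen rectangular neighbourhood of which admits one-coordinate dependence; the passage from this to ``a grid of points realizing a nontrivial convergent sequence in $Y^*$'' \emph{is} the theorem, and your proposal supplies no construction of the grid, no reason the relevant iterated limits must collide, and no indication of how the elementary-submodel reflection would yield a sequence actually converging in $Y^*$ rather than in some separable approximation. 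As written this is a plan with the hard step labelled ``the hard part''.

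A second, more specific concern: you base the contradiction only on $Y^*$ being an F-space with no nontrivial convergent sequences, whereas the cited result is proved for $\beta\bbN$-spaces, i.e., spaces in which the closure of every countable discrete subset is its \v{C}ech--Stone compactification. That stronger property is what the known argument uses to kill the offending grid (one exploits the $2^{\mathfrak c}$ distinct ultrafilter limits of the countable set $\{F(a_n,b_m)\}$, not merely the absence of a convergent subsequence), so you would need either to verify that your weaker hypothesis suffices --- which is far from clear --- or to prove that $Y^*$ is a $\beta\bbN$-space and follow Farah. Finally, the aside about connected factors is misstated: when $K_1$ is connected the theorem still allows $F$ to depend on the first coordinate on a rectangle $K_1\times B$, so the claim that $F$ ``must factor through a single projection on the whole factor'' is neither needed nor true as formulated.
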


\begin{prp} \label{prop:dimension}
Suppose the $\WEP$ holds for all continuous maps between \v{C}ech--Stone remainders. Then the $\WEP$ holds.
\end{prp}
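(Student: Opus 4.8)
\emph{Overview.} The plan is to reduce the general $\WEP$ to the case $d=\ell=1$ in two independent stages: first collapsing the codomain exponent $\ell$ to $1$ (easy), then collapsing the domain exponent $d$ to $1$ by means of Theorem~\ref{theorem:products} (the substance). Throughout I use the standard identification $X^*=(X\setminus V_X)^*\subseteq\beta(X\setminus V_X)$ whenever $V_X\subseteq X$ is open with compact closure, which is already implicit in the statement of $\WEP$.

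\emph{Reducing $\ell$.} Given continuous $F\colon (X^*)^d\to (Y^*)^\ell$, I would compose with the coordinate projections to form $F_j=\pi_j\circ F\colon (X^*)^d\to Y^*$ for $j\leq\ell$. Applying the exponent-$1$ codomain case (the next stage) to each $F_j$ produces $U_0^j\sqcup U_1^j=(X^*)^d$, an open $V^j\subseteq X$ with compact closure, and $G_j\colon(\beta(X\setminus V^j))^d\to\beta Y$ agreeing with $F_j$ on $U_1^j$, with $F_j[U_0^j]$ nowhere dense. I would then set $V_X=\bigcup_j V^j$, $U_1=\bigcap_j U_1^j$, $U_0=(X^*)^d\setminus U_1=\bigcup_j U_0^j$, and $G=(G_1,\dots,G_\ell)$ after restricting each $G_j$ to $(\beta(X\setminus V_X))^d$. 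On $U_1$ we get $G=F$, and since $\pi_j(F[U_0^j])=F_j[U_0^j]$ is nowhere dense, each $F[U_0^j]$ lies in the closed nowhere dense set $\pi_j^{-1}(\overline{F_j[U_0^j]})$; a finite union of nowhere dense sets being nowhere dense, $F[U_0]$ is nowhere dense.

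\emph{Reducing $d$.} For the case $\ell=1$, given continuous $F\colon(X^*)^d\to Y^*$, I would apply Theorem~\ref{theorem:products} with the compact space $K=X^*$ to cover $(X^*)^d$ by finitely many clopen rectangles $R_1,\dots,R_m$, with $R_i=\prod_{k\leq d}A_k^{(i)}$, on each of which $F$ depends on a single coordinate $j_i$ through a continuous $H_i\colon X^*\to Y^*$. Each $H_i$ is a map between \v{C}ech--Stone remainders, so the hypothesis gives $X^*=U_0^i\sqcup U_1^i$, an open $V^i\subseteq X$ with compact closure, and $G_i\colon\beta(X\setminus V^i)\to\beta Y$ with $G_i\restriction U_1^i=H_i\restriction U_1^i$ and $H_i[U_0^i]$ nowhere dense. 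It remains to glue these single-coordinate pieces into one decomposition of $(X^*)^d$.

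\emph{The gluing, and the main obstacle.} The overlaps of the rectangles are harmless, but the genuine difficulty is that a clopen subset of $X^*$ need not extend to a clopen subset of $\beta X$ (e.g. $\bbR^*$ splits into two clopen pieces although $\beta\bbR$ is connected). The remedy is to pay by enlarging $V_X$. I would first prove: for any finite family of clopen $A\subseteq X^*$ there is an open $V\subseteq X$ with compact closure such that each $A$ is the trace on $X^*$ of a clopen subset of $\beta(X\setminus V)$. For a single $A$, extend $\chi_A\colon X^*\to\{0,1\}$ by Tietze to $f\colon\beta X\to[0,1]$; the open set $\{1/3<f<2/3\}$ has $\beta X$-closure disjoint from $X^*$, hence is relatively compact, so any $V$ containing it splits $X\setminus V$ into two completely separated clopen pieces whose closures in $\beta(X\setminus V)$ are clopen and cut out $A$ on the remainder. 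Taking $V_X$ to contain all the $V^i$ together with the sets produced this way for the $A_k^{(i)}$ and the $U_1^i$, and writing $Z:=\beta(X\setminus V_X)$, the rectangles and good sets extend to clopen $\tilde R_i=\prod_k\tilde A_k^{(i)}$ and $\tilde U_1^i$ in $Z$. I would then take the finite clopen partition of $Z^d$ generated by the cylinders $\pi_k^{-1}(\tilde A_k^{(i)})$ and $\pi_{j_i}^{-1}(\tilde U_1^i)$. Every atom $Q$ meeting $(X^*)^d$ lies inside some $\tilde R_i$ (a remainder point of $Q$ lies in some $R_i\subseteq\tilde R_i$, forcing $Q\subseteq\tilde R_i$); for a chosen such $i$ I set $G=G_i\circ\pi_{j_i}$ on $Q$ when $Q\subseteq\pi_{j_i}^{-1}(\tilde U_1^i)$ and a fixed constant otherwise, which is continuous since the partition is clopen. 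Letting $U_1$ be the union of the traces on $(X^*)^d$ of the good atoms and $U_0$ its complement, a direct check using $G_i\restriction U_1^i=H_i\restriction U_1^i$ gives $G\restriction U_1=F\restriction U_1$, while $F[U_0]\subseteq\bigcup_i H_i[U_0^i]$ is nowhere dense. I expect this clopen-extension step to be the crux: one must verify that finitely many clopen subsets of the remainder are simultaneously realizable as traces of clopen subsets of $\beta(X\setminus V_X)$ at the cost of discarding a relatively compact part of $X$.
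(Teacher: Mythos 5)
Your proof is correct and follows essentially the same route as the paper: first collapse $\ell$ to $1$ via coordinate projections, then collapse $d$ to $1$ via Theorem~\ref{theorem:products} and apply the remainder-to-remainder case to the single-coordinate factors $H_i$. The only real difference is in the gluing step: the paper writes each clopen $A_j^i\subseteq X^*$ directly as $Z_{i,j}^*$ for an open $Z_{i,j}\subseteq X$ and applies the hypothesis to $H_i\colon Z_{i,j(i)}^*\to Y^*$, whereas you extend $H_i$ to all of $X^*$ and realize the clopen sets as traces of clopen subsets of $\beta(X\setminus V_X)$ after discarding a relatively compact part of $X$ --- a more explicit treatment of the same point, not a different argument.
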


\begin{proof}
The statement of the proposition asserts that if we can prove the weakening of the $\WEP$ where we only consider maps between \v{C}ech--Stone remainders, i.e. $d=\ell=1$, then the $\WEP$ holds.

At first, we show that we can assume $\ell=1$. For this, assume the $\WEP$ holds for all maps $(X^*)^d\to Y^*$ (where $X$ and $Y$ satisfy the appropriate hypotheses, and $d\geq 1$ is arbitrary). Let $F\colon (X^*)^d\to (Y^*)^\ell$ be a continuous map, and, for $i\leq \ell$, compose $F$ with the projection on the $i$-th coordinate maps. This gives continuous maps $F_i=\pi_i\circ F\colon(X^*)^d\to Y^*$. Applying the weak Extension principle to these maps, we get, for $i\leq \ell$, a clopen decomposition $(X^*)^d = U_0^i \cup U_1^i$ and an open set with compact closure $V_X^i$ such that $F_i[U_0^i]$ is nowhere dense and there is a continuous $G_i\colon (\beta X\setminus V_X^i)^d\to\beta Y$ which restricts to $F_i$ on $U_1^i$. The sets $U_0=\bigcup_i U_0^i$, $U_1=\bigcap_i U_1^i$, $V_X=\bigcup_i V_X^i$ and the map $G\colon (\beta X\setminus V_X^i)^d\to(\beta Y)^\ell$ sending $x$ to $(G_1(x),\ldots,G_\ell(x))$ witness that $F$ satisfies the $\WEP$.

Let us now suppose we have a continuous map of the form $F \colon (X^*)^d \to Y^*$, and apply Theorem~\ref{theorem:products}. Then there exists a finite cover of $(X^*)^d$ by clopen rectangles $R_1,\dots ,R_n$. Let $R_i = A_1^i \times \cdots \times A_d^i$. Since each $R_i$ is clopen in the compact space $(X^*)^d$, we can assume each $A^i_j\subseteq X^*$ is clopen, hence we can find open sets $Z_{i,j}\subseteq X$ such that $A_j^i=Z_{i,j}^*$. For every $i$, let $j(i)$ be such that $F\restriction R_i$ depends on the $j(i)$-th coordinate. Hence we can find a continuous $H_i\colon A_{j(i)}^i=Z_{i,j(i)}^*\to Y^*$. Applying the $\WEP$ to each $H_i$, for $i\leq n$, we get open sets with compact closure $V_i\subseteq Z_{i,j(i)}$, partitions into clopen sets $Z_{i,j(i)}^*=U_0^i\cup U_1^i$ and continuous maps $G_i\colon \beta (Z_{i,j(i)}\setminus V_i)\to \beta Y$ lifting $H_i$ on $U_1^i$. For $i\leq n$, define
\[
\tilde G_i\colon \beta Z_{i,1}\times\cdots\times \beta(Z_{i,j(i)}\setminus V_i)\times\cdots\times \beta Z_{i,d}\to \beta Y
\]
by mapping a tuple to the $G_i$ image of its $j(i)$-th entry. Fix $V_X=\bigcup V_i$. Gluing all of these maps together gives a continuous lift on 
\[
U_1=\bigcup_{i\leq n}(Z_{i,1}^*\times\cdots\times \underbrace{U_1^i}_{j(i)}\times\cdots\times Z_{i,d}^*)
\]
while the $F$-image of the complement of $U_1$ is a finite union of the nowhere dense sets $F[Z_{i,1}^*\times\cdots\times U_0^i\times\cdots\times Z_{i,d}^*]$. This concludes the proof.
 \end{proof}

From now on, we only deal with continuous maps between \v{C}ech--Stone remainders (i.e., we assume that $d=\ell=1$ in the definition of the $\WEP$). We approach these via Gel'fand's duality, by looking at the \cstar-algebras $C(X^*)$ and $C(Y^*)$ and at $^*$-homomorphisms between them. In this article, we only consider abelian \cstar-algebras, i.e., algebras of continuous functions on locally compact topological spaces where the topology is induced by the $\sup$-norm and operations are defined pointwise. For an introduction and basic terminology, see \cite[II.2]{Black:Operator}.

Algebras of the form $C(X^*)$ are typical examples of corona \cstar-algebras (\cite[II.7]{Black:Operator} or \cite[\S 13]{Fa:STCstar}). The structure of $^*$-homomorphisms between corona algebras and their behaviour under forcing axioms was extensively studied in \cite{mckenney2018forcing} and \cite{vignati2018rigidity}, after the use of forcing axioms to analyse massive quotients of \cstar-algebras was popularised by the seminal \cite{Fa:All}.

If $X$ and $Y$ are locally compact noncompact second countable topological spaces then Gelf'and's duality gives that $C(\beta X)\cong C_b(X)$, the latter being the algebra of bounded continuous functions on $X$, and that $C(X^*)\cong C_b(X)/C_0(X)$, where $C_0(X)$ is the algebra of continuous functions on $X$ vanishing at infinity. We denote by $\pi_X$  the canonical quotient map $\pi_X\colon C_b(X)\to C(X^*)$.

Since our spaces are second countable and locally compact, they are $\sigma$-compact (alternatively, the \cstar-algebras $C_0(X)$ and $C_0(Y)$ are $\sigma$-unital). The following notation is set up to match the hypotheses of \cite[Theorem 4.3]{vignati2018rigidity}.
We let 
\begin{itemize}
\item $Y_n$ be an increasing sequence of open subsets of $Y$ such that $Y=\bigcup Y_n$, each $Y_n$ has compact closure, and $\overline Y_n\subseteq Y_{n+1}$ for all $n\in\bbN$. We assume each $Y_{n+1}\setminus \overline Y_n$ is not empty, and that $Y_0=\emptyset$.
\item $U_{n}^e=Y_{10n+7}\setminus \overline Y_{10n}$ and $U_{n}^o=Y_{10n+12}\setminus \overline Y_{10n+5}$.
\end{itemize}
The letters $e$ and $o$ are for even and odd partition. Recall (\cite[II.3.4]{Black:Operator}) that if $A$ is a \cstar-algebra, a \cstar-subalgebra $B\subseteq A$ is hereditary if for all positive $a\leq b$, $b\in B$ implies that $a\in B$, where $a\in A$ is positive if $0\leq a$ (in the abelian setting, $a$ takes only values in $[0,\infty)$).

We have the following properties:
\begin{enumerate}
\item for every $n\neq m$ and $i\in \{e,o\}$,  $\overline U_n^i\cap \overline U_m^i=\emptyset$;
\item for each $i\in\{e,o\}$, $\prod_nC_0(U_n^i)$ is a hereditary \cstar-subalgebra of $C(\beta Y)$ such that $(\prod_nC_0(U_n^i))\cap C_0(Y)=\bigoplus_nC_0(U_n^i)$. 
\item $C_b(Y)=\prod_nC_0(U_n^e)+\prod_nC_0(U_n^o)$, and consequently 
\[
C(Y^*)=\prod_nC_0(U_n^e)/\bigoplus_nC_0(U_n^e)+\prod_nC_0(U_n^o)/\bigoplus_nC_0(U_n^o).
\]
\end{enumerate}
Note that the sum above is not a direct sum, as $U_n^e\cap U_n^o\neq\emptyset$ for all $n$. 

If $i\in\{e,o\}$ and $f\in \prod_nC_0(U_n^i)$, since the sets $U_n^i$, for $n\in\bbN$, are pairwise disjoint, we can view $f$ as a sequence $(f_n)$, where $f_n\in C_0(U_n^i)$. If $S\subseteq\bbN$, we write $f_S$ for the element of $\prod_nC_0(U_n^i)$ defined by 
\[
(f_S)_n=\begin{cases}
    f_n&\text{ if }n\in S\\
    0 & \text{ else}.
\end{cases}
\]
For fixed $S\subseteq\bbN$ and $i\in\{e,o\}$, the map $f\mapsto f_S$  gives a $^*$-homomorphism $\prod_nC_0(U_n^i)\to \prod_{n\in S}C_0(U_n^i)$, dual to the inclusion $\bigcup_{n\in S}U_n^i\subseteq\bigcup_n U_n^i$.

Recall, a $^*$-homomorphism between nonunital \cstar-algebras $\psi\colon A\to B$ is nondegenerate (\cite[II.7.3.8]{Black:Operator}) if the ideal generated by the image of $\psi$ is dense (in norm topology). This is equivalent to ask that approximate unit for $A$ is sent to an approximate unit of $B$. In the abelian setting, Gel'fand's duality asserts that nondegenerate $^*$-homomorphisms are dual to proper continuous maps. (Recall, a map between topological spaces is proper if inverse images of compact subsets are compact.)

The following is derived from \cite{vignati2018rigidity}. For the reader's convenience, we shortly sketch its proof, referring heavily to the notation of \cite{vignati2018rigidity}.

\begin{prp}\label{prop:oldrigidity}
Assume $\OCA$ and $\MA$. Suppose that $\Phi\colon C(Y^*)\to C(X^*)$ is a $^*$-homomorphism. Then there are a natural $\bar n$, open subsets of $X$ with compact closure $V_n^e$ and $V_n^o$, for $n\geq \bar n$, and nondegenerate $^*$-homomorphisms 
\[
\psi_n^i\colon C_0(U_n^i)\to C_0(V_n^i)\text{ for }i\in \{e,o\}
\]
such that:
\begin{enumerate}
\item\label{proplift1} For $i\in \{e,o\}$ and $n\neq m$, $\overline V_n^i\cap \overline V_m^i=\emptyset$, and, if $n>\bar n$,
\[
\overline {V_{n}^e}\subseteq V_{n-1}^o\cup V_{n}^e\cup V_n^o, \,\, \overline {V_{n}^o}\subseteq V_{n}^e\cup V_{n}^o\cup V_{n+1}^e.
\]
\item\label{proplift2} Every compact subset of $X$ intersects only finitely many of the sets $\{V_n^i\mid n\geq \bar n, i\in\{e,o\}\}$.
\item For $i\in \{e,o\}$, 
\[
\Psi^i=\sum_{n\geq \bar n}\psi_n^i\colon \prod_{n\geq \bar n} C_0(U_n^i)\to\prod_{n\geq \bar n} C_0(V_n^i)
\]
is a $^*$-homomorphism such that for every $f=(f_n)\in \prod_{n\geq \bar n}C_0(U_n^i)$ there is a nonmeager ideal $\mathcal I_f\subseteq\mathcal P(\bbN)$ such that
\[
\pi_X(\Psi^i(f_S))=\Phi(\pi_Y(f_S))
\]
for all $S\in\mathcal I_f$, and
\item\label{prp:oldc3} If $\gamma_n^i\colon V_n^i\to U_n^i$ is the proper continuous map dual to $\psi_n^i$, then $\gamma_n^e$ and $\gamma_m^o$ (equivalently, the $^*$-homomorphisms $\psi_n^e$ and $\psi_m^o$) agree on their common domain.
\end{enumerate}
\end{prp}
\begin{proof}
Write $X$ as a increasing union of nonempty open sets $X=\bigcup X_n$ where for each $n$ we have that $\overline{X_n}\subseteq X_{n+1}$, $X_{n+1}\setminus \overline{X_n}$ is nonempty, and each $\overline{X_n}$ is compact. We furthermore ask that if $K\subseteq X$ is compact then $K\subseteq X_n$ for some $n$. This can be done since $X$ is $\sigma$-compact. Writing $B$ for $C_0(X)$, we let $e_n^B\in C_0(X)$ be a positive contraction\footnote{A contraction in a \cstar-algebra is an element whose norm is $\leq 1$.} such that $e_n^B[\overline {X_n}]=1$ and $e_n^B[X\setminus X_{n+1}]=0$, so that $e_{n+1}^Be_n^B=e_n^B$ for all $n$. 

We are ready to construct the sets $V_n^i$. For convenience, we only give the details on how to construct the sets $V_n^e$; the `odd' sets can be constructed in the same way after appropriately re-indexing. 

Let $e_n^A$ be an approximate identity of positive contractions for $A=C_0(Y)$ such that $e_n^A[\overline Y_n]=1$ and $e_n^A(y)\neq 0$ if and only if $y\in Y_{n+1}$, and let $J_n=[10n-1, 10n+7]$, so that the algebra $A_{J_n}$ given in \cite[Notation 3.3]{vignati2018rigidity} is precisely $C_0(U_n^e)$. Applying the results of \S3 and \S4 of \cite{vignati2018rigidity}, and specifically Lemma 4.2 therein, we get mutually orthogonal\footnote{Meaning that if $n\neq m$ then for all $f\in C_0(U_n^e)$ and $f'\in C_0(U_m^e)$ we have $\alpha_n(f)\alpha_m(f')=0$.} functions $\alpha_n\colon C_0(U_n^e)\to C_0(X)$ such that the function 
\[
\Gamma^e=\sum\alpha_n\colon \prod_n C_0(U_n^e)\to C_b(X)
\]
has the property that for every $f=(f_n)\in\prod_n C_0(U_n^e)$ we have a nonmeager ideal $\mathcal I_f$ such that
\[
\pi_X(\Gamma^i(f_S))=\Phi(\pi_Y(f_S))
\]
for all $S\in\mathcal I_f$. The mutually orthogonal $\alpha_n$s come with two sequences of naturals $j_n<k_n$ such that the range of $\alpha_n$ is contained in $(e_{k_n}^B-e_{j_n}^B)C_0(X)(e_{k_n}^B-e_{j_n}^B)$ and $\lim_n j_n= \infty$. This is the key property in showing condition~\eqref{proplift2}, as it gives that if $K\subseteq X$ is compact then the set
\[
\{n\mid\exists f\in C_0(U_n^e) \exists x\in K \, (\alpha_n(f)(x)\neq 0)\}
\]
is finite. 

We will now slightly modify the maps $\alpha_n$, which themselves are not $^*$-homomorphisms. By Ulam stability for approximate maps between abelian \cstar-algebras (a result proved by S\v{e}rml in \cite{Semrl.USAbel}, see also \cite[\S5]{vignati2018rigidity} or, for details, \cite[\S5.4 and Theorem 5.18]{farah2022corona}) shows that we can find $\bar n$ and mutually orthogonal $^*$-homomorphisms $\psi_n^e\colon C_0(U_n^e)\to (e_{k_n}^B-e_{j_n}^B)C_0(X)(e_{k_n}^B-e_{j_n}^B)$, for $n\geq\bar n$ such that $\Psi^i=\sum\psi_n^e$ still lifts elements of $\prod_nC_0(U_n^e)$ on nonmeager ideals. The sets $V_n^e$ are now constructed from the maps $\psi_n^e$ as in \cite[\S4.1]{vignati2018rigidity}\footnote{Injectivity of the maps $\psi_n^e$ is stated as an assumption there, but it is not used there.}, by setting
\[
V_n^e=\bigcup_{f\in C_o(U_n^e)}\supp(\psi^e_n(f)).
\]
Since $\lim_n j_n=\infty$ and $V_n^e\subseteq X_{k_n}\setminus X_{j_n}$, if a $K$ is compact then $K$ can intersect at most finitely many of the $V_n^e$s. This shows condition ~\eqref{proplift2}. The other properties are verified exactly as in \S4.1 in \cite{vignati2018rigidity}. This concludes the construction of the sets $V_n^e$, and the proof of the proposition.
\end{proof}

If $f$ and $g$ are elements of $C_b(X)$, we write $f=_{C_0(X)}g$ for $f-g\in C_0(X)$.

\begin{proof}[Proof of Theorem~\ref{thm:main}]
  
By Proposition~\ref{prop:dimension} it is sufficient to prove $\WEP$ for maps between \v{C}ech--Stone remainders (i.e., when $d=\ell=1$). Fix then a continuous function $F\colon X^*\to Y^*$, and let $\Phi\colon C(Y^*)\to C(X^*)$ be the dual $^*$-homomorphism. Let $\tilde\Phi\colon C_b(Y)\to C_b(X)$ be any set theoretic lift of $\Phi$. All of our notation will be as in  Proposition~\ref{prop:oldrigidity} and the discussion preceding it.

Let 
\[
V_1=\bigcup_{n\geq \bar n}V_n^e\cup V_n^o\text{ and } W=\bigcup_{n\geq \bar n}U_n^e\cup U_n^o.
\]
\begin{claim} 
$V_1$ is, modulo compact, clopen.
\end{claim}
\begin{proof}
We show that $\overline V_1\setminus V_1$ is contained in the compact set $\overline V_{\bar n}^e$. Combining conditions~\eqref{proplift1} and \eqref{proplift2} of Proposition~\ref{prop:oldrigidity} gives that 
\[
\overline {V_1}=\overline{\bigcup_{n\geq\bar n} V_n^e\cup V_n^o} = \bigcup_{n\geq\bar n} \overline{V_n^e}\cup\bigcup_{n\geq\bar n}\overline{V_n^o}\subseteq \overline{V_{\bar n}^e}\cup \bigcup_{n\geq\bar n} V_n^e\cup V_n^o,
\]
where the equality $\overline{\bigcup_{n\geq\bar n} V_n^e\cup V_n^o} = \bigcup_{n\geq\bar n} \overline{V_n^e}\cup\bigcup_{n\geq\bar n}\overline{V_n^o}$ is given by the fact that compact subsets of $X$ can only intersect finitely many of the sets $V_n^e\cup V_n^o$. This shows that $\overline V_1\setminus V_1$ is contained in the compact set $\overline{V_{\bar n}^e}$.
\end{proof}
(As a side note, a deep understanding of \cite[\S4.1]{vignati2018rigidity} shows that if the spaces of interest can be written as a union of pairwise disjoint compact open sets, $V_1$ can be chosen to be clopen. We shall not need this fact.)

As the continuous proper maps  $\gamma_n^i$ agree on their common domains, the function
\[
\Gamma=\bigcup_{n\geq \bar n}\gamma_n^e\cup\gamma_n^o\colon V_1\to W
\]
is a well-defined continuous function. Furthermore, $\Gamma$ is proper: if $K\subseteq W$ is compact, then there exists $\bar n'$ such that $K\subseteq\bigcup_{\bar n\leq n\leq \bar n'}U_n^e\cup U_n^o$, hence $\Gamma^{-1}[K]$ is a closed subset of $\bigcup_{\bar n\leq n\leq \bar n'}V_n^e\cup V_n^o$. As the latter has compact closure, $\Gamma^{-1}[K]$ is compact. Since $\Gamma$ is proper, it induces a $^*$-homomorphism 
\[
\Psi\colon C_b(W)\to C_b(V_1)
\]
mapping $C_0(W)$ to $C_0(V_1)$ and whose dual is $\beta\Gamma\colon \beta V_1\to \beta W\subseteq\beta Y$. 

We are ready to define the necessary sets and maps to prove the $\WEP$: first, let $V_X\subseteq X$ be an open set with compact closure such that $V_1$ is clopen in $X\setminus V_X$, and set $V_0=(X\setminus V_X)\setminus V_1$. Note that  $\beta (X\setminus V_X)=\beta V_1\sqcup\beta V_0$ and therefore $U_0:=V_0^*$ and $U_1:=V_1^*$ is a clopen partition of $X^*$, by compactness of $\overline{V_X}$. Pick a point in $y\in\beta Y$, let $G'\colon \beta V_0\to \{y\}$ be the constant map, and let $G=\beta\Gamma\cup G'$.

We are now going to show these objects satisfy $\WEP(X,Y)$, that is, that $G\restriction U_1=F$ and that $F[U_0]$ is nowhere dense.
\begin{claim}\label{claim1}
$G\restriction U_1=F$.
\end{claim}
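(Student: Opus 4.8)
The plan is to show that $G$, restricted to the clopen set $U_1 = V_1^*$, equals the given map $F$. Since both $G\restriction U_1$ and $F$ are continuous maps into the compact Hausdorff space $(Y^*)$ (or $\beta Y$, composed with the relevant identifications), it suffices to verify that the dual $^*$-homomorphisms agree. Concretely, I would translate the claim into the language of \cstar-algebras: $G\restriction U_1 = F$ holds if and only if for every $g \in C(Y^*)$, the functions $\Phi(g)$ and ``$G^*(g)$'' agree on $U_1$, i.e.\ modulo $C_0(X)$ after restricting to $V_1$. Using the lift $\tilde\Phi$ of $\Phi$ and the $^*$-homomorphism $\Psi \colon C_b(W) \to C_b(V_1)$ induced by the proper map $\Gamma$, the goal becomes: for every $f \in C_b(Y)$ (viewed through the decomposition $C_b(Y) = \prod_n C_0(U_n^e) + \prod_n C_0(U_n^o)$), one has
\[
\tilde\Phi(f)\restriction V_1 =_{C_0(V_1)} \Psi(f\restriction W).
\]

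The key steps, in order, are as follows. First, I would reduce to testing on elements coming from a single parity, using the decomposition $C_b(Y) = \prod_n C_0(U_n^e) + \prod_n C_0(U_n^o)$ from Property~(3) preceding Proposition~\ref{prop:oldrigidity}; by linearity it is enough to handle $f \in \prod_n C_0(U_n^i)$ for $i \in \{e,o\}$ separately. Second, for such an $f = (f_n)$, I would invoke clause~(2) of Proposition~\ref{prop:oldrigidity}, which provides a nonmeager (in particular nonempty, indeed cofinal) ideal $\mathcal I_f$ with $\pi_X(\Psi^i(f_S)) = \Phi(\pi_Y(f_S))$ for all $S \in \mathcal I_f$. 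Here $\Psi^i = \sum_n \psi_n^i$ is exactly the restriction of the globally glued map $\Psi$ to the parity-$i$ block, since $\Gamma$ was defined precisely by gluing the $\gamma_n^i$ dual to the $\psi_n^i$. Third, I would leverage that $\mathcal I_f$ is nonmeager to conclude agreement for $f$ itself, not just for its restrictions $f_S$: the set of $S$ for which $\pi_X(\Psi^i(f_S)) = \Phi(\pi_Y(f_S))$ forms an ideal (it is closed under subsets and finite unions by $^*$-homomorphism linearity), and a nonmeager ideal on $\mathcal P(\bbN)$ necessarily contains a cofinite-like approximation allowing one to upgrade $f_S$-agreement to $f$-agreement modulo $C_0$. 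This is the standard mechanism: since $f - f_S \in \bigoplus_{n \notin S} C_0(U_n^i)$ contributes only a $C_0$ error once $S$ is large in the ideal, the equality $\pi_X(\Psi^i(f)) = \Phi(\pi_Y(f))$ follows.

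The main obstacle I expect is the third step: passing from agreement on the $f_S$ for $S$ ranging over a nonmeager ideal to agreement on $f$ in the quotient. The delicate point is that $\mathcal I_f$ is only nonmeager, not the full power set, so one cannot simply take $S = \bbN$; instead one must exploit the Baire-category largeness of $\mathcal I_f$ together with the fact that the discrepancy $\Psi^i(f) - \Psi^i(f_S)$ lives in the block algebra and vanishes in the corona whenever $S$ is $\mathcal I_f$-large. I would handle this by showing that the ``agreement set'' $\{S : \pi_X(\Psi^i(f_S)) = \Phi(\pi_Y(f_S))\}$ is itself an ideal containing the nonmeager $\mathcal I_f$, hence nonmeager, and then arguing that any nonmeager ideal contains sets whose complements are $C_0$-negligible for the purpose of the corona map—precisely because $\Psi^i$ and $\Phi \circ \pi_Y$ are both continuous and send $\bigoplus_n C_0(U_n^i)$ into the kernel $C_0(X)$ of $\pi_X$. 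Once this is secured for each parity, combining the two and restricting to $V_1$ yields $\tilde\Phi(f)\restriction V_1 =_{C_0(V_1)} \Psi(f\restriction W)$, which dualizes back to $G\restriction U_1 = F$, completing the claim.
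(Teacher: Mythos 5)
Your overall strategy --- dualize, split $C_b(Y)$ into the two parity blocks, feed each block into Proposition~\ref{prop:oldrigidity}(2), and then upgrade agreement on the $f_S$ for $S\in\mathcal I_f$ to agreement on $f$ itself --- matches the skeleton of the paper's proof, but the upgrade step, which is the entire content of the claim, does not work as you describe it, for two reasons. First, the principle you invoke, that a nonmeager ideal ``contains a cofinite-like approximation'' or ``sets whose complements are $C_0$-negligible,'' is false: a proper ideal containing all finite sets contains no cofinite set, and for coinfinite $S$ the tail $f-f_S=f_{\bbN\setminus S}$ lies in $\prod_{n\notin S}C_0(U_n^i)$, not in $\bigoplus_{n\notin S}C_0(U_n^i)$, so it does \emph{not} vanish in the corona. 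The correct mechanism is the Jalali--Naini/Talagrand dichotomy (the only nonmeager ideal with the Baire property containing $\mathrm{Fin}$ is $\mathcal P(\bbN)$), and to apply it one must exhibit a \emph{Borel} ideal containing $\mathcal I_f$. Your agreement set $\{S\mid \pi_X(\Psi^i(f_S))=\Phi(\pi_Y(f_S))\}$ is not obviously Borel, since its definition passes through the quotient and through $\Phi$ evaluated at varying arguments; the paper circumvents this by fixing one lift $\tilde\Phi(f)$ once and for all and setting $\mathcal J_f=\{S\mid g_S(\Psi(f)-\tilde\Phi(f))\in C_0(X)\}$ for a strictly continuous family $S\mapsto g_S$ of cut-downs supported in $V_1$, which makes Borelness immediate.

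Second, and more seriously, the identity you are trying to upgrade to is false in general. If $\pi_X(\Psi(f))=\Phi(\pi_Y(f))$ held for all $f$ in both parity blocks, then writing $1=\pi_Y(f_e+f_o)$ and summing would express $\Phi(1)=1$ as $\pi_X(\Psi(f_e)+\Psi(f_o))$, which is supported in $U_1$; this forces $\chi_{U_0}=0$, i.e.\ $U_0=\emptyset$. So your agreement set is in general a \emph{proper} nonmeager ideal, and no upgrade to $S=\bbN$ is available for it. What must actually be proved is the cut-down identity $\Psi(f)=_{C_0(X)}\chi_{V_1}\tilde\Phi(f)$, i.e.\ that $\Psi$ lifts $\Phi_1=\chi_{U_1}\Phi$ rather than $\Phi$ --- consistent with your correctly stated goal $\tilde\Phi(f)\restriction V_1=_{C_0(V_1)}\Psi(f\restriction W)$, but not with the agreement set you propose to analyse. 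The bridge between the blockwise data of Proposition~\ref{prop:oldrigidity} and this cut-down is precisely the paper's Subclaim, $g_S\tilde\Phi(f)=_{C_0(X)}\chi_{V_1}\tilde\Phi(f_S)$, whose proof (a sequence tending to infinity, nondegeneracy of the $\psi_n^i$, and a second use of the nonmeager ideals to produce orthogonal test functions $h_T$) is the technical heart of the argument and is entirely absent from your proposal; without it, or some substitute, neither the inclusion $\mathcal I_f\subseteq\mathcal J_f$ nor the identification of $g_\bbN\tilde\Phi(f)$ as a lift of $\Phi_1(f)$ can be established.
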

\begin{proof}
First, write $\Phi$ as $\Phi=\Phi_0\oplus\Phi_1$, where 
\[
\Phi_i(f)=\chi_{U_i}\Phi(f), \,\, i\in \{0,1\},
\]
$\chi_{U_i}\in C(X^*)$ being the characteristic function of $U_i$. We have the following diagram:
\begin{center}
\begin{tikzcd}
 & C(U_0)  \arrow[dr] & \\
C(Y^*) \arrow[dr, "\Phi_1" ] \arrow[ur, "\Phi_0"] \arrow[rr, "\Phi = \Phi_0 \oplus \Phi_1"] & & C(X^*).  \\
 & C(U_1) \arrow[ur] &
\end{tikzcd}
\end{center}
Both $\Phi_0$ and $\Phi_1$ are $^*$-homomorphisms, as $C(X^*)$ is abelian. To show that $G\restriction U_1=F$, it is enough to show that the $^*$-homomorphism $\Psi$ lifts $\Phi_1$.

Since every function $f\in C_b(Y)$ can be written (modulo $C_0(Y)$) as a sum $f=f_e+f_o$ where, for $i\in \{e,o\}$, $f_i\in \prod_{n\geq \bar n}C_0(U_n^i)$, it is enough to show $\Psi$ lifts $\Phi_1$ on $\prod_{n\geq \bar n}C_0(U_n^i)$, again for $i\in \{e,o\}$. For convenience, let $i=e$, and fix mutually orthogonal positive contractions $g_n\in C_b(X)$ such that for all $n\geq \bar n$
\[
g_n[V_n^e]=1\text{ and }\supp (g_n) \subseteq V^o_{n-1} \cup V^e_n \cup V^o_n.
\]
Let $g_n=0$ if $n<\bar n$. Such functions exist as distinct $V_n^e$s have disjoint closures. For $S\subseteq\bbN$, let $g_S=\sum_{n\in S}g_n$. Note that, even if $V_1$ is not clopen, it is clopen modulo compact, and therefore the function $\chi_{V_1}$ is equal, modulo $C_0(X)$, to a continuous function.
\begin{subclaim}
For every $f\in \prod_{n\geq \bar n}C_0(U_n^e)$ and $S\subseteq\bbN$ we have that 
\[
g_S\tilde\Phi(f)=_{C_0(X)}\chi_{V_1}\tilde\Phi(f_S).
\]
\end{subclaim}
\begin{proof}
Fix $f\in\prod_{n\geq \bar n}C_0(U_n^e)$ and $S\subseteq\bbN$. We can suppose $S$ is infinite, otherwise both $g_S$ and $\tilde\Phi(f_S)$ belong to $C_0(X)$. We will show that for every $\varepsilon>0$ there is a compact set $K$ such that 
\begin{equation}\label{eqn1}
\text{if }(x\in V_1\setminus K\text{ and } |\tilde\Phi(f_S)(x)|>\varepsilon)\text{ then }x\in \bigcup_{n\in S}V_n^e.
\end{equation}
This gives that $g_S\tilde \Phi(f_S)=_{C_0(X)}\chi_{V_1}\tilde \Phi(f_S)$. As $S$ is arbitrary, the same reasoning applied to $\bbN\setminus S$, gives that $g_S\tilde\Phi(f_{\bbN\setminus S})\in C_0(X)$. Altogether, we get that
\[
\chi_{V_1}\tilde\Phi(f_S)=_{C_0(X)}g_S\tilde\Phi(f_S)=_{C_0(X)}g_S\tilde\Phi(f_S)+g_S\tilde\Phi(f_{\mathbb N\setminus S})=_{C_0(X)}g_S\tilde\Phi(f),
\]
which is the claim.

Suppose \eqref{eqn1} fails for $\varepsilon>0$. Then there is a sequence of points $x_k\in V_1\setminus(\bigcup_{n\in S}V_n^e)$ such that $|\tilde\Phi(f_S)(x_k)|>\varepsilon$ and $x_k\to\infty$ as $k\to\infty$ (meaning for every compact $K\subseteq X$ we have that $K\cap \{x_k\}$ is finite). Passing to a subsequence, we can assume that there is $i\in \{e,o\}$ such that $\{x_k\}\subseteq \bigcup_n V_n^i$. Let $n_k$ be such that $x_k\in V_{n_k}^i$. As $x_k\to\infty$ as $k\to\infty$, we can assume, by further shrinking the sequence $\{x_k\}$, that $\{n_k\}$ is strictly increasing and that $n_0>\bar n$. 

Since the homomorphisms $\psi_n^i$ are nondegenerate, then for all $n>\bar n$ and $i\in \{e,o\}$,
\begin{align*}
V_n^i&=\bigcup\{\supp(\psi_n^i(h))\mid h\in C_0(U_n^i)\}\\&=\{x\in X\mid\exists h\in C_0(U_n^i), 0\leq h\leq 1, \psi_n^i(h)(x)=1\}\\
&=\{x\in X\mid\exists h\in C_0(U_n^i), 0\leq h\leq 1, \Psi(h)(x)=1\}.
\end{align*}
Hence, for each $k$ we can find positive contractions $h_{n_k}\in C_0(U_{n_k}^i)$ such that 
\[
\psi_{n_k}^i(h_{n_k})(x_k)=\Psi(h_k)(x_k)=1.
\]
Since $x_k\notin\bigcup_{n\in S}V_n^e=\Gamma(\bigcup_{n\in S}U_n^e)$, we can further assume that $\supp(h_{n_k})\cap \bigcup_{n\in S}U_n^e=\emptyset$. 

Set $h_j=0$ if $j\notin \{n_k\mid k\in\mathbb N\}$, and let 
\[
h=\sum h_k\in \prod_{n\geq \bar n} C_0(U_n^i).
\]
Notice that for every $T\subseteq\bbN$ we have $\Psi(h_T)(x_k)=1$ whenever $k$ is such that $n_k\in T$ and $h_Tf_S=0$, since $\supp(f_S)\subseteq\bigcup_{n\in S}U_n^e$ which is disjoint from $\supp(h)\supseteq \supp(h_T)$. 

Let $\mathcal I_h$ be the nonmeager ideal given to us from Proposition~\ref{prop:oldrigidity}, and let $T$ be an infinite set in $\mathcal I_h$ such that $T\subseteq\{n_k\}$, so that $\Psi(h_T)=_{C_0(X)}\tilde\Phi(h_T)$. Since $|\tilde\Phi(f_S)(x_k)|>\varepsilon$ then 
\[
\limsup_{k\to\infty}|\tilde\Phi(f)\tilde\Phi(h_T)(x_k)|=\limsup_{k\to\infty}|\tilde\Phi(f)\Psi(h_T)(x_k)|\geq \varepsilon.
\]
Since $x_k\to\infty$ as $k\to\infty$, this contradicts that $\tilde\Phi(f)\tilde\Phi(h_T)\in C_0(X)$, as $\tilde\Phi$ lifts the $^*$-homomorphism $\Phi$ and $h_T$ and $f_S$ are orthogonal.
\end{proof}
By the subclaim, $g_\bbN\tilde\Phi(f)$ is a lift for $\Phi_1(f)$.
Let 
\[
\mathcal J_f=\{S\subseteq\bbN\mid g_S(\Psi(f)-\tilde\Phi(f))\in C_0(X)\}.
\]
Since $g_S\Psi(f)=\Psi(f_S)$ for all $S\subseteq\bbN$, and $\Psi(f_S)=_{C_0(X)}\tilde\Phi(f_S)$ whenever $S\in\mathcal I_f$, then $\mathcal I_f\subseteq\mathcal J_f$, and the latter is therefore nonmeager and contains all finite sets. Since the association $S\mapsto g_S$ is continuous (when $C_b(X)$ is given the strict topology), $C_0(X)$ is Borel in $C_b(X)$, and $\Psi(f)$ and $\tilde\Phi(f)$ are fixed, $\mathcal J_f$ is a Borel ideal. Since the only Borel nonmeager ideal containing $\mathrm{Fin}$ is $\mathcal P(\bbN)$, $\mathcal J_f=\mathcal P(\bbN)$, hence $g_\bbN(\Psi(f)-\tilde\Phi(f))\in C_0(X)$. Since $g_\bbN\Psi(f)=\Psi(f)$ and $g_\bbN\tilde\Phi(f)$ lifts $\Phi_1(f)$, we have that $\Psi(f)$ lifts $\Phi_1(f)$, which is our thesis. The claim is then proved.
 \end{proof}
\begin{claim}\label{claim2}
$F\restriction U_0$ has nowhere dense range.     
\end{claim}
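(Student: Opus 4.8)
The plan is to reduce nowhere density to a statement about the $^*$-homomorphism $\Phi_0(\,\cdot\,)=\chi_{U_0}\Phi(\,\cdot\,)$ dual to $F\restriction U_0$, and then to exploit that the leftover region $U_0=V_0^*$ is disjoint from every block image $V_n^i$. Since $U_0$ is clopen in the compact space $X^*$, the range $F[U_0]$ is compact, hence closed, so it suffices to prove that it has empty interior. I would phrase this functionally: it is enough to show that for every nonzero positive $\phi\in C(Y^*)$ there is a nonzero positive $\psi\in C(Y^*)$ with $\psi\le\phi$ and $\Phi_0(\psi)=0$. Indeed, then the nonempty open set $\{\psi>0\}$ is contained in $\{\phi>0\}$ and disjoint from $F[U_0]$ (because $\Phi_0(\psi)=0$ means $F[U_0]\subseteq\{\psi=0\}$), so no nonempty open set can sit inside $F[U_0]$, giving the claim by contradiction once $\phi$ is chosen with cozero set inside a hypothetical nonempty open subset of $F[U_0]$.

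Given such a $\phi$, I would fix a positive lift $\tilde\phi\in C_b(Y)$; as $\phi\neq 0$, $\tilde\phi$ does not vanish at infinity. Because the sets $U_n^e,U_n^o$ cover $Y$ outside a compact set (this is forced by property (3) above), there is a parity $i\in\{e,o\}$ and an infinite set $A\subseteq\bbN$ of blocks on each of which $\tilde\phi$ attains a value bounded below by a fixed $\delta>0$. On each such block I would use local compactness and the compactness of $\overline{U_n^i}$ to pick a nonzero $b_n\in C_0(U_n^i)$ with $0\le b_n\le\tilde\phi\restriction U_n^i$ and $\norm{b_n}_\infty\ge\delta/4$, and set $b=(b_n)\in\prod_{n\ge\bar n}C_0(U_n^i)$ with $b_n=0$ for $n\notin A$, so that $0\le b\le\tilde\phi$. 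The crucial computation is that $\Phi_0(\pi_Y(b_S))=0$ for every $S\in\mathcal I_b$: by Proposition~\ref{prop:oldrigidity}(2), $\Phi(\pi_Y(b_S))=\pi_X(\Psi^i(b_S))$, and $\Psi^i(b_S)\in\prod_n C_0(V_n^i)$ is supported in $V_1$, so $\pi_X(\Psi^i(b_S))$ is supported in $U_1=V_1^*$ and is therefore annihilated by $\chi_{U_0}$.

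It remains to choose $S\in\mathcal I_b$ for which $\pi_Y(b_S)\neq 0$, i.e.\ with $S\cap A$ infinite (so that $b_S$ fails to vanish at infinity, using the uniform lower bound on $\norm{b_n}_\infty$). This is exactly where nonmeagerness of $\mathcal I_b$ enters: if every $S\in\mathcal I_b$ met $A$ finitely, then $\mathcal I_b$ would be contained in $\{S:S\cap A\text{ finite}\}$, which is a meager subset of $\mathcal P(\bbN)$, being the countable union of the closed nowhere dense sets $\{S:S\cap A\subseteq F\}$ over finite $F\subseteq A$; this contradicts nonmeagerness. Fixing such an $S$, the element $\psi=\pi_Y(b_S)$ is nonzero and positive, satisfies $\psi\le\phi$ (hence $\{\psi>0\}\subseteq\{\phi>0\}$), and has $\Phi_0(\psi)=0$, which is the $\psi$ required above.

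I expect the main obstacle to be this last combinatorial step — extracting from $\mathcal I_b$ a set that remains infinite on the support $A$ of $b$ — since Proposition~\ref{prop:oldrigidity} only guarantees the agreement of $\Phi$ with $\Psi^i$ along the sets of $\mathcal I_b$, and one must verify that these sets are simultaneously large enough to produce a nonzero test function and small enough (supported in $V_1$) to be invisible to $U_0$. The conceptual heart, by contrast, is the clean observation that block functions are sent, along $\mathcal I_b$, entirely into $V_1$, so that the complementary corner $U_0$ can only carry a nowhere dense image.
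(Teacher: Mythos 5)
Your proof is correct and follows essentially the same route as the paper's: both arguments manufacture a test function supported on infinitely many blocks $U_n^i$ meeting a hypothetical open piece of $F[U_0]$, use nonmeagerness of the ideal from Proposition~\ref{prop:oldrigidity} to find an infinite $S$ on which $\Phi$ agrees with $\Psi^i$, and conclude from the fact that the range of $\Psi^i$ is supported in $V_1$ and hence annihilated by $\chi_{U_0}$. Your ``hereditary positivity'' packaging via $0\le\psi\le\phi$ is a slightly cleaner way to reach the contradiction than the paper's norm computation $\norm{\Phi(h_T)\chi_{U_0}}=1$, but the mathematical content is the same.
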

\begin{proof}
Since $F$ is a continuous map between compact spaces, it is closed, hence so is $F[U_0]$. Suppose for a contradiction that there is a nonempty open $U \subseteq F[U_0]\subseteq Y^*$. Let $O\subseteq \beta Y$ be open such that $O\cap Y^*=U$, and let $O'=O\cap Y$. Since the sets $\{U_n^i\mid i\in \{e,o\}, n\in\bbN\}$ cover $Y$, we can find an infinite $S\subseteq\bbN$ and $i\in \{e,o\}$ such that $O'\cap U_n^i\neq\emptyset$ for all $n\in S$. For $n\in S$, let $h_n$ be a positive contraction whose support is in $U_n^e$ and such that there is $x\in O'$ with $h_n(x)=1$. Let $h=\sum h_n$. Note that for every infinite $T\subseteq S$ we have that $\norm{\Phi(h_T)\chi_{U_0}}=1$. Find $T\subseteq S$ infinite such that $\Psi$ lifts $\Phi$ on $h_T$, and notice that $\chi_{U_0}\pi_X(\Psi(h_T))=0$. This is a contradiction.
\end{proof}
Claims~\ref{claim1} and \ref{claim2} together show that $G$, $U_0$ and $U_1$ satisfy the hypotheses of $\WEP(X,Y)$. This concludes the proof of the theorem.
\end{proof}

The following is a consequence of a result of Farah: 
\begin{corollary}
Assume $\OCA$ and $\MA$. Let $\kappa<\lambda$ be cardinals, and suppose that $X$ and $Y$ are locally compact noncompact second countable spaces. Then there is no continuous surjection $(X^*)^\kappa \to (Y^*)^\lambda$.
\end{corollary}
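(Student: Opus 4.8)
The plan is to deduce this from \cite[Theorem 5.1]{Fa:Dimension} by supplying the now-unconditional weak Extension Principle. By Theorem~\ref{thm:main}, under $\OCA$ and $\MA$ the principle $\WEP(X,Y)$ holds for \emph{every} pair of locally compact noncompact second countable spaces. The non-surjectivity argument of \cite[Theorem 5.1]{Fa:Dimension} rests on exactly two inputs: the one-coordinate-dependence phenomenon of Theorem~\ref{theorem:products} (a $\ZFC$ statement, valid for all such spaces and all exponents, including infinite ones by the cited footnote), and the triviality of the resulting single-coordinate maps, which is precisely $\WEP$. Farah had this triviality available only for restricted classes of spaces; since Theorem~\ref{thm:main} now grants it for arbitrary $X$ and $Y$, his argument applies verbatim, and I would present the corollary as the statement that its hypotheses are henceforth met in full generality.

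For completeness I would recall the shape of the reduction. Suppose toward a contradiction that $F\colon (X^*)^\kappa \to (Y^*)^\lambda$ is a continuous surjection with $\kappa<\lambda$. For each $\alpha<\lambda$ the coordinate map $F_\alpha=\pi_\alpha\circ F\colon (X^*)^\kappa\to Y^*$ is continuous, so Theorem~\ref{theorem:products} covers $(X^*)^\kappa$ by finitely many clopen rectangles on each of which $F_\alpha$ depends on at most one coordinate; hence $F_\alpha$ factors through the projection onto a finite set of coordinates $s(\alpha)\in[\kappa]^{<\aleph_0}$ (if some $s(\alpha)=\emptyset$ then $F_\alpha$ is constant and $F$ already fails to be onto). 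Since $\lambda>\kappa=|[\kappa]^{<\aleph_0}|\cdot\aleph_0$, some fibre $A=\{\alpha:s(\alpha)=s^*\}$ of the assignment $\alpha\mapsto s(\alpha)$ is infinite. Writing $d=|s^*|$ and restricting the target to a countable $A_0\subseteq A$, the surjection $F$ yields, upon factoring through the projection $(X^*)^\kappa\to (X^*)^{s^*}$, a continuous surjection $(X^*)^d\to (Y^*)^{\aleph_0}$ from a finite power onto a countable power.

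It then suffices to rule out a continuous surjection $G\colon (X^*)^d\to (Y^*)^{\aleph_0}$ with $d$ finite. For each finite $\ell$, apply $\WEP(X,Y)$ (legitimately, as $d,\ell\geq 1$ are naturals) to the truncation $G^{(\ell)}\colon (X^*)^d\to (Y^*)^\ell$. This produces a clopen partition $(X^*)^d=U_0\cup U_1$ with $G^{(\ell)}[U_0]$ nowhere dense, together with a map induced by a proper continuous map of the underlying spaces that agrees with $G^{(\ell)}$ on $U_1$. One now needs that the image $G^{(\ell)}[U_1]$ of such a \emph{trivial} map is not all of $(Y^*)^\ell$ once $\ell$ is large relative to $d$; combined with the nowhere density of $G^{(\ell)}[U_0]$, this shows $G^{(\ell)}$ is not surjective, hence neither is $G$.

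The hard part is exactly this last step: converting triviality of the maps (supplied by $\WEP$) into non-surjectivity onto a high power of $Y^*$. Bounding the image of a $\beta$-extension of a base map on the remainder—showing it is nowhere dense, or at least proper, in $(Y^*)^\ell$—is the genuinely dimension-theoretic content, and it is this analysis that we borrow wholesale from \cite[Theorem 5.1]{Fa:Dimension}. Our only addition is that Theorem~\ref{thm:main} removes the restriction on the spaces for which the triviality hypothesis is known, so that Farah's dimension estimate can be invoked for arbitrary $X$ and $Y$.
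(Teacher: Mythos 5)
Your proof is correct and is essentially the paper's own argument: the corollary is obtained by citing \cite[Theorem 5.1]{Fa:Dimension}, whose only hypothesis beyond $\ZFC$ is the $\WEP$, now supplied in full generality by Theorem~\ref{thm:main}. The additional sketch of Farah's reduction is accurate expository material but not part of the paper's (one-line) proof, which simply defers that analysis to the citation as you ultimately do as well.
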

\begin{proof}
This is a consequence of Theorem 5.1 in \cite{Fa:Dimension}, which asserts that the thesis holds if we have $\WEP$. The result then follows from Theorem~\ref{thm:main}.
\end{proof}
The $\WEP$ has consequences on the existence of surjections from \v{C}ech--Stone remainders to powers of them: while $\CH$ implies that $\mathbb N^*$ surjects onto every compact space of density $\aleph_1$, and therefore onto every power of remainder of every (reasonable small) space, the same cannot happen if $\WEP$ holds, where for example that $(\mathbb N^*)^d$ surjects onto $(\mathbb N^*)^\ell$ if and only if $d\geq \ell$ (see \cite[\S4.6]{Fa:AQ}). In future work we intend to explore these phenomena, along with extension principles in the noncommutative setting.



\bibliographystyle{amsplain}
\bibliography{bibliography}

\end{document}